\newtheorem{thm}{Theorem}[section]
\newtheorem{lemma}[thm]{Lemma}
\newtheorem{rem}[thm]{Remark}
\newtheorem{cor}[thm]{Corollary}
\newcommand{\reals}{\mathbb{R}}
\newcommand{\ex}{\mathbb{E}}
\newcommand{\Ex}[1]{\mathbb{E}\left[\,#1\,\right]}
\newcommand{\Fc}{\mathcal{F}}
\newcommand{\Ac}{\mathcal{A}}
\newcommand{\Zc}{\mathcal{Z}}
\newcommand{\Oc}{\mathcal{O}}
\newcommand{\olo}{\int_{t_i}^{t_{i+1}}}
\newcommand{\dHalf}[1]{\frac{\delta_{#1}}{2}}
\newcommand{\Ypi}[1]{Y^{\pi}_{2,t_{#1}}}
\newcommand{\Zpi}[1]{Z^{\pi}_{2,t_{#1}}}
\newcommand{\yEuler}[1]{Y^{\pi}_{1,t_{#1}}} 
\newcommand{\zEuler}[1]{Z^{\pi}_{1,t_{#1}}}
\newcommand{\dY}[1]{\Delta^{\pi}_{#1} Y}
\newcommand{\dZ}[1]{\Delta^{\pi}_{#1} Z}
\newcommand{\df}[1]{\Delta^{\pi}_{#1} f}
\newcommand{\abs}[1]{\left|\,#1\,\right|}
\newcommand{\paren}[1]{\left(\,#1\,\right)}
\newcommand{\brac}[1]{\left[\,#1\,\right]}
\newcommand{\norm}[1]{\left\|#1\right\|}
\newcommand{\dB}[1]{\Delta W_{i+1}^{#1}}
\newcommand{\dt}[1]{\delta_{#1}}
\numberwithin{equation}{section}
\begin{document}

\title{Second order discretization of Backward SDEs\footnote{The results in this paper were announced by the second author at the workshop "New advances in Backward SDEs for financial engineering applications", Tamerza (Tunisia), October  25 - 28, 2010}} 
\author{D. Crisan, K. Manolarakis\thanks{%
Department of Mathematics, Imperial College London, 180 Queen's Gate,
London, SW7 2AZ, UK.}}
\date{}
\maketitle

\begin{abstract} 
In \cite{CriMan} the authors suggested a new algorithm for the numerical approximation of a BSDE by merging the cubature method with the first order discretization
developed by \citet{BouchardTouzi} and \citet{Z}. Though the algorithm presented in [5] compared satisfactorily with other methods it lacked the higher order nature of the cubature method 
due to the use of the low order discretization. In this paper we  introduce a second order discretization of the BSDE in the spirit of higher order 
implicit-explicit schemes for forward SDEs and predictor corrector methods. 
\vspace{5mm}

\noindent \textbf{Key words:} Backward SDEs, Second order discretization, Numerical analysis.

\vspace{5mm}

\noindent \textbf{Mathematics subject classification: 60H10; 65C30.}

\end{abstract} 

\section{Introduction}
The present paper is concerned with the problem of numerical approximation to forward backward SDEs (FBSDEs henceforth). Let $(\Omega,{\cal F},P)$ be a probability space on which we have defined a triple of processes $(X,Y,Z)$ which solve the decoupled forward-backward system:
\begin{equation}
\begin{split}
&X_t = X_0 +\int_0^t V_0\left(X_u\right) du + \sum_{i=1}^d \int_0^t V_i\left(X_u\right)\circ dW^i_u \\  
&Y_{t}=\Phi(X_T) +\int_{t}^{T}f(X_u ,Y_{u},Z_{u})du-\int_{t}^{T}Z_{u}\cdot dW_{u}\label{BSDE1}
\end{split},\quad t\in \lbrack 0,T]\ .  
\end{equation}
where $W_t$ is a $d$-dimensional Brownian motion and 
\[
V_k: \reals^q \rightarrow \reals^q,\, k=0 ,\ldots,d,\quad f:\reals^q\times \reals \times \reals^d\rightarrow \reals
\] 
are some appropriate functions.  
The system is called decoupled as the (backward) processes $(Y,Z)$ do not appear in the dynamics of the forward component $X$. 
Systems of the form \eqref{BSDE1} have received a lot of attention over the past twenty years primarily due to their applications in the field of  Mathematical Finance (see for example \cite{m2an} and the references therein for details).

Of equal importance is the fact that, the stochastic flow associated with \eqref{BSDE1}, i.e., the triple of processes  $(X^{(t,x)},Y^{(t,x)},Z^{(t,x)})$, $(t,x)\in [0,T]\times  \reals$ satisfying 
\begin{equation}
\begin{split}
&X_s^{t,x} = x +\int_t^s V_0\left(X_u^{t,x}\right) du + \sum_{i=1}^d \int_t^s V_i\left(X_u^{t,x}\right)\circ dW^i_u \\  
&Y_{s}^{t,x}=\Phi(X_T^{t,x}) +\int_{s}^{T}f(X_u ^{t,x},Y_{u}^{t,x},Z_{u}^{t,x})du-\int_{t}^{T}Z_{u}^{t,x}\cdot dW_{u}  \label{flowBSDE1}
\end{split}\quad s\in \lbrack t,T],
\end{equation}
provides a Feynman-Kac representation for the (viscosity) solution of a class of semi-linear
PDEs. In particular, let $u(t,x):[0,T]\times \reals^d \rightarrow \reals^d$, be the viscosity solution of 
\begin{equation}
\label{b_pde}
\begin{split}
&\frac{du}{dt} +\tilde{V}\left( x\right) \cdot \nabla u+\frac{1}{2}Tr\left[
V\left( x\right) V^{\ast }\left( x\right) D^{2}u\right] +f\left(
t,x,u,\nabla uV\left( x\right) \right) =0 \\  
&u(T,x) = \Phi(x)
\end{split},
\end{equation}
where 
%\begin{equation}
$\tilde{V}\left( x\right) =V_{0}(x)-\frac{1}{2}\sum_{j=1}^{d}\nabla
V_{j}(x)V_{j}(x)$.  
%\label{Vtilde}\end{equation}     
In their seminal work \cite{PP2}, Pardoux and Peng showed that 
\begin{equation}
u(t,x) = Y^{t,x}_t,\quad (t,x) \in  [0,T]\times \reals^d.
\end{equation}
In addition, \citet{Mazhang} showed that when this viscosity solution is continuously differentiable in its spatial variables, we have the following  representation  
for the solution of the PDE \ref{b_pde} and its gradient
\begin{equation}
\label{FKrep}
u(t,x) = Y^{t,x}_t,\quad Z^{t,x}_t = \nabla u(t,x) V(x)  ,\quad a.s.
\end{equation}
From the perspective of numerical analysis, the above means that any probabilistic method for the resolution of \eqref{BSDE1}, provides an algorithm 
for the resolution of semi-linear PDEs.
As a result, the interest in robust algorithms  for their resolution is high.   
  
Algorithms designed to solve the problem of numerical approximation of the backward part of \eqref{BSDE1} consist of two parts: 
Firstly, the backward equation is discretized. This step involves the use of one or more   conditional expectations. Secondly, a numerical method is grafted onto the chosen discretization to compute the conditional expectations involved. Following the work of \citet{BouchardTouzi} and \citet{Z}, the most popular discretization scheme is given by:\footnote{See \citet{benderzhang} for an alternative approach based on Picard iterations} 
\begin{equation}
\label{BkwdEuler}
\begin{split}
&\yEuler{n} := \Phi(X_{t_n}),\quad \zEuler{n}:=0 \\
&\zEuler{i}:= \frac{1}{\dt{i+1}}
\ex\brac{\yEuler{i+1} \Delta W_{i+1}|\Fc_{t_i}},\quad i=0,\ldots,n-1\\
&\yEuler{i}:= \ex\brac{\yEuler{i+1}|\Fc_{t_i}} + f\paren{t_i,X_{t_i},\yEuler{i},\zEuler{i}},\quad i=0,\ldots,n-1,
\end{split}
\end{equation}
where $\pi$ is a given partition $\pi:=\{0=t_0<t_1<\ldots<t_n=T\}$ of $[0,T]$ and $\Delta W_{i+1}=W_{t_{i+1}}-W_{t_i}$, $i=0,\ldots,n-1$.

Assuming that all coefficients of \eqref{BSDE1} are at least Lipschitz continuous in their spatial variables,  we have, following \cite{BouchardTouzi} and \cite{Z}, that
\begin{equation}
\label{BkwdEulerEst}
\sup_{0\le t\le T} \ex\brac{\abs{Y^{\pi}_t -Y_t}^2} +\ex\brac{\int_0^T \abs{Z^{\pi}_t -Z_t}^2 dt } \le C |\pi|,
\end{equation}
where $\{(Y^\pi_t,Z^\pi_t),t\ge 0\}$ are the step processes
\begin{equation}
\label{stepEuler} 
Y^{\pi}_t:= \sum_{i=0}^{n-1}\yEuler{i}\, 1_{[t_i,t_{i+1})}(t) +  \yEuler{n}\, 1_{t=t_n},\quad Z^{\pi}_t:= \sum_{i=0}^{n-1}\zEuler{i} \,1_{[t_i,t_{i+1})}(t) +  \zEuler{n}\, 1_{t=t_n}.
\end{equation}
In other words, the above discretization of the backward part achieves a convergence of order $1/n^{1/2}$, when $n$ points are used, i.e. the same order 
 as the strong convergence order of the Euler scheme for a (classical) SDE.  When more smoothness on the coefficients is assumed, it is shown in  
\citet{gobetlabart} that the rate of convergence 
 of the processes  \eqref{stepEuler} is of order $1/n$. In fact an error expansion is obtained in \cite{gobetlabart} and the leading order coefficients in the error expansion are identified. 

To be more precise, the above results are proved for the case when the process $X$ in (\ref{BkwdEuler}) is replaced by its Euler approximation 
$X^\pi$ and, respectively, the filtration $\{\Fc_{t_i}\}_{i=0}^n$     
is replaced by the natural filtration $\{\Fc^{\pi}_{t_i}\}_{i=0}^n$ 
associated   with $X^\pi$. However the same proofs apply both to $X^\pi$ and $X$.  

To obtain an implementable scheme using the discretization \eqref{BkwdEuler}, one has to develop a method for approximating the conditional expectations\footnote{When the process $X$ in (\ref{BkwdEuler}) is replaced by its Euler approximation, then 
$\ex[\yEuler{i+1}|\Fc^\pi_{t_i}] $ and $\ex[\yEuler{i+1}  \Delta W_{i+1}|\Fc^\pi_{t_i}]$ need to be computed.}  $\ex[\yEuler{i+1}|\Fc_{t_i}] $ and $\ex[\yEuler{i+1}  \Delta W_{i+1}|\Fc_{t_i}]$. Various such methods have been introduced, based on Malliavin calculus \cite{BouchardTouzi}, on projection on function basis \cite{gobetlemorwarin}, \cite{gobetlemorwarin2} and on 
quantization \cite{ballypages}. In \cite{CriMan}, the authors suggested the application of the cubature method of \cite{LV}, which is  based on the ideas of \citet{ Kusuoka}.
The overall rate of convergence of this second approximation step is still of order $1/n^{1/2}$ or of order $1/n$ when coefficients are smooth. 

In this paper, we introduce a discretization for the backward component  of the  BSDEs of order 
$1/n^2$. As for the discretization \eqref{BkwdEuler}, this new scheme will require the computation of conditional expectations.
This can be done by means of an order 5  cubature formula. Since the order 5 cubature method has local order error $1/n^3$,  when combined with a second order discretization for BSDEs, will generate a genuine 
second order algorithm for BSDEs.

To understand which terms such a scheme should include we revisit 
\eqref{BkwdEuler} from the point of view of Stratonovich-Taylor expansions. Moreover, to deduce the rate of convergence of the method, we shall rely on the Feynman Kac representation \eqref{FKrep}
and on some estimates on the bounds for the derivatives of \eqref{b_pde} obtained by \citet{delarue}. 

\section{Preliminaries}

Throughout the paper, we will use  the following assumptions:
\begin{description}
\item[(A)]  The coefficients of the forward SDE $V_{i}:\mathbb{R}%
^{d}\rightarrow \mathbb{R}^{d}$, $i=0,1,...,d\;$have all entries belonging
to $C_{b}^{\infty }(\mathbb{R}^{d})$, the space of bounded infinitely
differentiable functions with all partial derivatives bounded. We also
assume that the matrix $V:=(V_{1}|\ldots |V_{d})$ is elliptic.

\item[(B)]  The driver of the BSDE $f:[0,1]\times \mathbb{R}^{d}\times 
\mathbb{R}\times \mathbb{R}^{d}\rightarrow \mathbb{R}$ belongs to $%
C_{b}^{[m/2],m}$. The exact value for the parameter $m$ shall be made precise as we proceed.
\item[(C1)] The terminal condition $\Phi:\reals^d\rightarrow \reals$ is Lipschitz continuous.  
\item[(C2)] The terminal condition $\Phi$ belongs to $ C_b^m\left(\reals^d;\reals\right)$. Again the value of $m$ shall be determined further on. 

\end{description}
We shall denote by $K$ the constant that bounds all derivatives that appear in our assumptions. When (C1) is in force
we shall assume that $\Phi$ is  $K$-Lipschitz. 
Of course under (A), (B) with $m\ge 1$ and (C1) the system \eqref{BSDE1} has a unique solution such that
\[
\Ex{\sup_{0\le t\le T}\left(Y_t^2 +|X_t|^2\right) +\int_0^T |Z_s|^2 ds} <\infty
\]

To abbreviate notation we shall denote by $M_i \equiv M_{t_i},\,i=0,\ldots,n$,  where $M$ can be any of the processes that appear in this paper. Moreover we write $\ex_s\brac{\cdot}$ for $ \ex\brac{\cdot|\Fc_s}$. Note that in the current (Markovian) set up $\ex\brac{\cdot|\Fc_s} = \ex\brac{\cdot|X_s}.$
We shall also consider as given a partition $\pi:=\{0=t_0<t_1<\ldots<t_n=T\}$ of $[0,T]$ and consider the related notation
\[
\dt{i} := t_{i} -t_{i-1},\quad \Delta W_{i}:= W_i - W_{i-1},\quad \Delta W^l_{i}:= W^l_i - W^l_{i-1},\quad l=1,\ldots,d,\quad i=1,\ldots,n.
\] 

Lastly, the driver of the BSDE shall be abbreviated as $\bar{f}(s,x) = f(x,u(s,x),\nabla u V (s,x))$, where $u$ is the solution of the semilinear PDE \eqref{b_pde}.

\begin{rem}
Any attempt to discretize the backward part of \eqref{BSDE1} should start with a method to produce (an approximation for ) the value of the forward diffusion 
at the times on the partition $\pi$. In this paper we assume that we have at our disposal the forward diffusion $X$ at the times of the partition $X_{t_1},\ldots,X_{t_n}$. 
This entails  no loss of generality, as we aim to combine the second order discretization with the cubature method, which approximates directly the law of $X$ at the times of the partition.  

For the benefit of methods that would combine the present  discretization with a Monte Carlo simulation, that most probably requires some sort of discretization
for the forward process $X$, we note that all results regarding the second order scheme (particularly Theorem \ref{mainDiscrError} and Corollary \ref{mainRate})
remain valid, once one  replaces $X$ with a second  order approximation.
A wealth of such approximations\footnote{Such approximations can be chosen to be \emph{explicit}, in other words completely derivative free.} are presented on chapters 12- 15 of \cite{KP}.
\end{rem}

Working towards a higher order discretization of the backward part of \eqref{BSDE1} we shall rely heavily on the Stratonovich-Taylor expansions. Hence we need 
to fix notation and present some elementary facts regarding these expansions:

\subsection{Stratonovich-Taylor expansions}
In the following we will use the set of multi-indices
$\mathcal{A}=\{\emptyset \}\cup \bigcup_{m=1}^{\infty }\{0,1,\ldots ,d\}^{m}$ endowed with the norms $|\cdot|$ and $\left\|\cdot\right\|$ given by
\[
|\beta |= \mbox{ length of } \beta,\quad \norm{\beta} :=|\beta |+\mathrm{Card}\{j:\beta _{j}=0,\,\,1\leq j\leq |\beta |\}.
\]
Clearly $|\emptyset|=\|\emptyset\| =0$. For a $\beta =(j_1,\ldots,j_l) \in \Ac$ we also write $\,\beta -=(j_{1},\ldots ,j_{l-1})$ and $\,-\beta =(j_{2},\ldots ,j_{l}).$ 
We define the subsets of $\mathcal{A},$ \begin{equation*}
\mathcal{A}_{m}=\{\beta \in \mathcal{A}:\,\,\norm{ \beta} \leq m\}\;\;\,%
\mathrm{and}\,\,\ \mathcal{A}_{m}^{1}=\{\beta \in \mathcal{A}\backslash
\{\emptyset ,(0)\}:\,\,\norm{ \beta} \leq m\}.
\end{equation*}
Given two multi-indices $\alpha =(\alpha _{1},\ldots ,\alpha _{k})$ and $%
\beta =(\beta _{1},\ldots ,\beta _{l})$ we define their concatenation as $\alpha \ast \beta =(\alpha _{1},\ldots ,\alpha _{k},\beta _{1},\ldots ,\beta
_{l})$. For a suitably chosen function $f$ and a multi-index $\beta =(\beta
_{1},\ldots ,\beta _{l})$, we define the iterated Stratonovich/Lebesgue
integrals as follows 
\begin{equation*}
J_{\beta }[f]_{t,\,s}:= 
\begin{cases}
f(s) & |\beta |=0 \\ 
\int_{t}^{s}J_{\beta -}[f]_{t,\,u}du & l\geq 1,\,\,j_{l}=0 \\ 
\int_{t}^{s}J_{\beta -}[f]_{t,\,u}\circ dW^{j_{l}}(u) & l\geq 1,j_{l}\not=0
\end{cases}
.\end{equation*}
Let $L^j,\ j=0,\ldots,d$ be the operators:
\[
L^j := \sum_{k=1}^q V_j^k \partial_{x_k},\quad j=1,\ldots,d,\quad\quad
L^0 := \partial_t + \sum_{k=1}^q V_0^k \partial_{x_k}.
\] 
The iteration of this family of operators is understood as :
$L^{\alpha}\,f:=L^{\alpha_1}\ldots L^{\alpha_n}\,f,$ 
$\alpha=(\alpha_1,\ldots,\alpha_n)$ and we also use the convention $L^{\emptyset} f = f$.

Given a multi index $\alpha=(\alpha_1,\ldots,\alpha_n)$  the following identity is proven in Proposition 5.2.10 of \cite{KP}
\begin{equation}
\label{Strat1}
W_t^j\, J\,^{\alpha}[1]_{0,t} =\sum_{k=0}^{n+1} J_{0,t}\,^{(\alpha_1,\ldots,\alpha_{k},j,\alpha_{k+1},\alpha_n)},\quad j=1,\ldots,d.
\end{equation}
A direct consequence of \eqref{Strat1} (Corollary 5.2.11 of \cite{KP}) is that if $\alpha = (k,k,\ldots,k),\,\,k=0,\ldots,d$
with $|\alpha|=m$, then 
\begin{equation}
\label{Strat2}
J^{\alpha}[1]_{0,t} = \frac{1}{m!}\left(J\,^{(k)}[1]_{0,t}\right)^k
\end{equation}

The (conditional) expectations of iterated integrals can also be computed. In particular we have a characterization for those 
iterated integrals that have non zero expectation (see, for example, \cite{CG}):
\begin{lemma}
\label{StratExpectations} Let $\alpha =(i_{1},\ldots ,i_{r})$ be an arbitrary multi-index with $\norm{\alpha}=m$ 
and $t\in \lbrack 0,T]$. If $m$ is odd,
then $\mathbb{E}[J_{\alpha }[1]_{0,t}]=0$ and if $m$ is even then 
\begin{equation*}
\mathbb{E}[J_{\alpha }[1]_{0,t}]= 
\begin{cases}
\frac{t^{m/2}}{2^{r-m/2}(m/2)!} & \mathrm{if}\;\alpha \in \mathcal{A}_{m,r}
\\ 
0 & \mathrm{otherwise}
\end{cases}
,
\end{equation*}
where$\;A_{m,r}\subset \mathcal{A}_{m}$ is the set of multi-indices with $%
\alpha =\alpha _{1}\ast \ldots \ast \alpha _{m/2}\in \mathcal{A}_{m}$, such
that $\alpha _{i}=(0)$ or $\alpha _{i}=(j,j),\,j\in \{1,\ldots ,d\}.\;$
\end{lemma}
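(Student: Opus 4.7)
The plan is to dispose of the odd-norm case by a symmetry argument and handle the even-norm case by induction on the length $r$ of the multi-index, converting the outermost Stratonovich integration into an Itô integration plus a quadratic-covariation correction.

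For odd $m$, I would exploit the involution $W \mapsto -W$ on path space, which preserves Wiener measure. Reading off the iterative definition of $J_\alpha[1]_{0,t}$, each Stratonovich factor $\circ\, dW^{i_l}$ with $i_l \neq 0$ changes sign, while Lebesgue factors do not, so the integral is multiplied by $(-1)^{|\alpha| - \#\{l:\alpha_l = 0\}}$. Since $\|\alpha\| = |\alpha| + \#\{l: \alpha_l = 0\}$, this exponent has the same parity as $m$. Hence for odd $m$, $J_\alpha[1]_{0,t}$ is equidistributed with its own negative, forcing $\mathbb{E}[J_\alpha[1]_{0,t}] = 0$.

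For even $m$, I proceed by strong induction on $r$, with the trivial base $r=0$, $\alpha = \emptyset$, $J_\emptyset[1]_{0,t} = 1$, matching $t^0/(2^0\cdot 0!)$. In the inductive step I split on the last index. If $i_r = 0$, Fubini gives $\mathbb{E}[J_\alpha[1]_{0,t}] = \int_0^t \mathbb{E}[J_{\alpha-}[1]_{0,s}]\,ds$ and the inductive hypothesis applied to $\alpha-$ (length $r-1$, norm $m-2$) together with elementary integration recovers the claimed formula; the pairing structure is preserved, because $\alpha$ ends in the complete block $(0)$ precisely when $\alpha-$ is itself a concatenation of valid blocks. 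If $i_r = j \neq 0$, I convert the outer Stratonovich integration to Itô,
$$J_\alpha[1]_{0,t} = \int_0^t J_{\alpha-}[1]_{0,s}\, dW^j_s + \tfrac{1}{2}\bigl\langle J_{\alpha-}[1]_{0,\cdot}, W^j\bigr\rangle_t,$$
drop the mean-zero Itô integral, and compute the covariation by reading off the martingale part of $dJ_{\alpha-}[1]_{0,s}$ from its own Stratonovich-to-Itô expansion: this equals $\delta_{i_{r-1},j}\,J_{(\alpha-)-}[1]_{0,s}\, ds$, and vanishes when $i_{r-1} = 0$ or $i_{r-1} \neq j$. Only the pairing $i_{r-1} = i_r = j$ contributes, giving
$$\mathbb{E}[J_\alpha[1]_{0,t}] = \tfrac{1}{2}\int_0^t \mathbb{E}[J_{(\alpha-)-}[1]_{0,s}]\, ds,$$
to which the inductive hypothesis applies with norm $m-2$ and length $r-2$. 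A short computation, using $r-1-(m-2)/2 = r-m/2$ in the first case and $r-2-(m-2)/2 = r-m/2-1$ (combined with the $1/2$) in the second, recovers $t^{m/2}/(2^{r-m/2}(m/2)!)$ exactly.

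The main obstacle is the bookkeeping: keeping $\|\alpha\|$, $|\alpha|$ and the pairing structure consistent through the recursion, and verifying that the non-pairing subcase $i_r \neq 0$, $i_{r-1} \neq i_r$ produces precisely the multi-indices lying outside $\mathcal{A}_{m,r}$, so that the dichotomy in the conclusion matches exactly the cases produced by the induction.
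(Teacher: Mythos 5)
The paper does not prove this lemma at all --- it simply cites the literature (``see, for example, \cite{CG}''; the statement is essentially Lemma 5.7.2 of Kloeden--Platen). So there is no in-paper proof to compare against; judged on its own, your argument is correct and is in fact the standard route. The odd case via the measure-preserving involution $W\mapsto -W$ is sound: the integral picks up $(-1)^{r-z}$ where $z$ is the number of zero indices, and $r-z=m-2z$ has the parity of $m=\|\alpha\|$, forcing a zero mean. The even case recursion is also right: for $i_r=0$ Fubini reduces to $\alpha-$ with $(\|\alpha-\|,|\alpha-|)=(m-2,r-1)$, preserving $r-m/2$; for $i_r=j\neq 0$ the Stratonovich-to-It\^o correction $\tfrac12\langle J_{\alpha-}[1]_{0,\cdot},W^j\rangle_t$ survives only when $i_{r-1}=i_r=j$ (it vanishes when $i_{r-1}=0$ because $J_{\alpha-}$ is then of finite variation, and when $0\neq i_{r-1}\neq j$ by independence of the Brownian components), reducing to $(\alpha-)-$ with $(m-2,r-2)$ and contributing the extra factor $\tfrac12$ that accounts for the unchanged exponent $r-m/2$; both subcases integrate $s^{m/2-1}$ to produce the $(m/2)!$. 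The dichotomy also matches: a multi-index in $\mathcal{A}_{m,r}$ ending in $j\neq 0$ must end in the block $(j,j)$, so the non-contributing subcases are exactly the complement of $\mathcal{A}_{m,r}$. The only details worth making explicit in a written version are the integrability needed to drop the It\^o integral (iterated Stratonovich integrals lie in finite Wiener chaos, hence are square-integrable, so the stochastic integral is a true mean-zero martingale) and the uniqueness of the block decomposition used in the bookkeeping; neither is a gap.
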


A non empty subset $\mathcal{G}\subseteq \mathcal{A}$ is called a
hierarchical set if 
$\sup_{\alpha \in \mathcal{G}}|\alpha |<\infty$ and  
$-\alpha \in\mathcal{G}\text{ for any }\alpha \in \mathcal{G}\backslash \{\emptyset \}.
$
Given a hierarchical set $\mathcal{G}$ we define the remainder set 
\[
\mathcal{B}(\mathcal{G})=\{\beta \in \mathcal{A}\backslash \mathcal{G}:\,\,-\beta \in \mathcal{G}\}.
\]
Note that $\mathcal{A}_{m}$ and $\mathcal{A}_{m}^{1}$ are
hierarchical sets. If $f:\mathbb{R}^{d}\rightarrow \mathbb{R}$ is a smooth
function with all partial derivatives bounded, then by repeatedly applying
the It\^{o}-Stratonovich formula, we obtain the Stratonovich-Taylor expansion 
\begin{equation}
\begin{split}
f(X_{t}^{0,x})&=\sum_{\alpha \in \mathcal{K}}L^{\alpha }f(x)J^{\alpha
}[1]_{0,t}+\sum_{\alpha \in \mathcal{B}(\mathcal{K})}J^{\alpha }[L^{\alpha
}f(\cdot )] _{0,t}\\
&=: \sum_{\alpha \in \mathcal{K}}L^{\alpha }f(x)J^{\alpha}[1]_{0,t} + R_m(t,x,f)
\end{split}
\label{Taylor}
\end{equation}
for any hierarchical set $\mathcal{K}.$ The second term on the right hand side of (\ref{Taylor}) is called the
m-th order remainder process and is denoted by $R_{m}(t,x,f).$ Obviously as $m$ increases, and for small $t$,  the remainder process  
gets  smaller and smaller. 

A hierarchical set that facilitates computations for Stratonovich-Taylor expansions is $\Ac(m)$.
For this set we have that 
\[
\mathcal{B}\left(\Ac(m)\right)  =\{(j)\star\beta|j=0,\ldots,d,\quad \|\beta\|= m\}.
\]
To estimate the remainder that corresponds to $\mathcal{B}\left(\Ac(m)\right)$, we need to estimate iterated Stratonovich integrals 
of the form $J^{\alpha}\left[g\paren{\cdot,X_{\cdot}}\right]_{s,t}$ for appropriate function $g$ such that the previous integral makes sense.
For  $0\le s \le t$ with $t-s<1$, we have 
\begin{equation}
\label{TaylorRemainder1}
\begin{split}
\left|\sup_{s\le r \le t} \ex_s\left[J^{\alpha}\left[g(\cdot,X_{\cdot})\right]_{s,r}\right]\right| &\le C(t-s)^{\norm{\alpha}/2} 
\Bigl(\sup_{s\le r \le t}\bigl\{\ex_s\left[\left|L^{\alpha_n}g(r,X_r)\right|1_{\alpha_n\neq 0}\right]\\
&\hspace{40mm} + \ex_s\left[\left|g(r,X_r)\right|1_{\alpha_n= 0}\right]\bigr\}\Bigr)
\end{split},
\end{equation}
see chapter 5 of \citet{KP}.
Applying the estimate \eqref{TaylorRemainder1} to every index in the set  $\mathcal{B}\left(\Ac(m)\right)$ , provides us with the following estimate for the remainder 
of the Taylor formula,    
\begin{equation}
\ex\brac{\abs{R_m(t,X,f)}^p}^{1/p} \le  C(t-s)^{(m+1)/2} \max_{m\le \|\gamma\| \le m+2} \sup_{0\le r\le t}\ex\brac{\abs{L^{\gamma} f(r,X_r)}^p}^{1/p}.
\label{TaylorRemainder2}
\end{equation}

To conclude our preliminary results, we shortly discuss the backward PDE \eqref{b_pde}. For the latter, when all coefficients are smooth and bounded, it is 
well understood that it posseses a smooth and bounded solution. Moreover, a smooth solution $u(t,x)$ will still exist when working only under 
\textbf{(C1)} for $t\in [0,T)$. The solution itself will be bounded on all $[0,T]$. Its derivatives however shall not. The next result
characterises the behaviour of the derivatives.

\begin{thm}[\citet{delarue}]
\label{pdeDer}
Let \textbf{(A)} and \textbf{(B)} hold true and assume further that $\Phi$ is Lipschitz continuous. Then there exists a unique $u\in C_b^{\lceil m/2 \rceil,m}\paren{[0,T)\times \reals^d}$ 
that solves \eqref{b_pde}. Moreover, for any $\alpha \in \Ac(m)$ we have
\[
\left \|D^{\alpha} u(t,\cdot)\right\|_{\infty} \le C\frac{\norm{\nabla \Phi}_{\infty}}{\paren{T-t}^{(\norm{\alpha}-1)/2}},\quad t\in [0,T),
\]
for a constant $C$ independent of $u$. 
\end{thm}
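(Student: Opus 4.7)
The plan is to prove the theorem by a regularization argument. I would mollify the Lipschitz terminal condition $\Phi$ to obtain a family $\Phi^\epsilon \in C_b^\infty(\reals^d;\reals)$ with $\norm{\nabla \Phi^\epsilon}_\infty \le \norm{\nabla \Phi}_\infty$ and $\Phi^\epsilon \to \Phi$ locally uniformly. Under \textbf{(A)}, \textbf{(B)}, and a smooth terminal datum, classical semilinear parabolic theory (or equivalently the Feynman--Kac representation combined with differentiation of the forward--backward flow) delivers a classical solution $u^\epsilon \in C_b^{\lceil m/2\rceil, m}([0,T]\times \reals^d)$. The substantive step is to derive bounds on $D^\alpha u^\epsilon$ that are uniform in $\epsilon$ and exhibit the correct singular behaviour $(T-t)^{-(\norm{\alpha}-1)/2}$ as $t\uparrow T$. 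Once these are in hand, passing to the limit $\epsilon\downarrow 0$ locally uniformly on $[0,T)\times\reals^d$ produces $u$ with the required regularity, and uniqueness in the stated class follows from a standard comparison argument applied with the aid of the gradient bound.

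For the base case $\norm{\alpha}=1$ I would differentiate the FBSDE in its initial data. The tangent process $\nabla X^{t,x}$ satisfies a linear SDE with bounded coefficients by \textbf{(A)}, while $(\nabla Y^{\epsilon,t,x},\nabla Z^{\epsilon,t,x})$ solves a linear BSDE whose terminal condition is $\nabla \Phi^\epsilon(X^{t,x}_T)\nabla X^{t,x}_T$ and whose driver is Lipschitz in $(y,z)$ by \textbf{(B)}. A Girsanov change of measure that absorbs the $z$-dependence of the linearised driver, followed by the standard $L^\infty$ estimate for linear BSDEs, yields $\norm{\nabla u^\epsilon(t,\cdot)}_\infty \le C\norm{\nabla \Phi}_\infty$ uniformly in $\epsilon$ and $t$, matching the claim for $\norm{\alpha}=1$.

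For higher-order derivatives I would invoke a Bismut--Elworthy--Li integration-by-parts formula, which is legitimate thanks to the uniform ellipticity of $V$ in \textbf{(A)}. Starting from the representation of $\nabla u^\epsilon$ as an expectation involving $\nabla \Phi^\epsilon$ along the flow, each extra spatial differentiation can be transferred off the flow and onto an $\Fc_T$-measurable Malliavin weight whose $L^2$-norm grows like $(T-t)^{-1/2}$, contributing exactly one factor of $(T-t)^{-1/2}$ per additional derivative. Time derivatives are traded, via the PDE \eqref{b_pde} itself, for two spatial derivatives plus lower order terms, which is exactly why a $0$ in $\alpha$ counts as $2$ rather than $1$ in $\norm{\alpha}$. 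An induction on $\norm{\alpha}$ applied to $u^\epsilon$ then delivers the full estimate uniformly in $\epsilon$.

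The main obstacle is running the Bismut--Elworthy--Li argument in the presence of the nonlinear driver: the expectation representing $\nabla u^\epsilon$ also contains terms involving $\partial_y f$ and $\partial_z f$ evaluated at $(X^{t,x}_r, u^\epsilon, \nabla u^\epsilon V)$ for $r\in [t,T]$, so differentiating reintroduces $D^\beta u^\epsilon$ with $\norm{\beta}\le \norm{\alpha}$ inside the integrand. I would close this feedback loop by a Picard/contraction argument in the weighted norm $\sup_{\norm{\alpha}\le m,\, t<T}(T-t)^{(\norm{\alpha}-1)/2}\norm{D^\alpha \cdot(t,\cdot)}_\infty$; the singular factor is integrable on $[t,T]$ for $\norm{\alpha}\le m$, so the $ds$-integrals arising from the driver preserve the self-improving structure. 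The fixed-point bound then transfers to $u$ after letting $\epsilon\downarrow 0$.
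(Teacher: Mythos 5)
The paper itself offers no proof of this statement: it is quoted verbatim from the cited reference (\citet{delarue}), so there is no internal argument to compare against. Your outline does, in fact, track the strategy of that reference quite faithfully --- mollify $\Phi$, obtain the gradient bound $\norm{\nabla u^\epsilon}_\infty \le C\norm{\nabla\Phi}_\infty$ from the linearised BSDE plus a Girsanov transformation, and then pay one factor of $(T-t)^{-1/2}$ per additional spatial derivative via a Bismut--Elworthy--Li/Malliavin integration by parts, with time derivatives converted to pairs of spatial ones through the PDE. The base case and the mechanism producing the exponent $(\norm{\alpha}-1)/2$ are correctly identified.

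There is, however, a genuine gap in the step where you close the nonlinearity. You assert that the weighted fixed-point argument works because ``the singular factor is integrable on $[t,T]$ for $\norm{\alpha}\le m$.'' It is not: for $\norm{\beta}\ge 3$ the factor $(T-s)^{-(\norm{\beta}-1)/2}$ appearing when $D^\beta u^\epsilon(s,\cdot)$ sits inside the Duhamel integral is at least $(T-s)^{-1}$, which diverges at $s=T$; symmetrically, transferring all derivatives onto a Malliavin weight over $[t,s]$ produces $(s-t)^{-k/2}$ with $k\ge 2$, which diverges at $s=t$. The actual argument must split the time interval (say at $(t+T)/2$) and, on each half, balance how many derivatives are shifted onto the integration-by-parts weight and how many are retained on $u^\epsilon$ at the running time $s$, so that the resulting $s$-singularity is always of order strictly less than one while the prefactor still aggregates to $(T-t)^{-(\norm{\alpha}-1)/2}$. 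This balancing, carried out inductively, is the technical core of the cited proof, and without it your contraction estimate does not close. A secondary, smaller point: the bound you get from Girsanov controls $\nabla Y^{\epsilon,t,x}_t$, and you should say a word about why the change of measure (whose density depends on $\nabla u^\epsilon V$ itself) does not degrade the constant; this is handled because only the $\p$-a.s.\ bound on the terminal value $\nabla\Phi^\epsilon(X_T)\nabla X_T$ enters the $L^\infty$ estimate, but it deserves to be made explicit.
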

In other words, when the final condition is Lipschitz continuous (and the driver is smooth), the derivatives of the solution of \eqref{b_pde}, explode  as $t\uparrow T$ with the given rate.   
The exact rates are important to quantify the error of our method. 

\section{One step discretization}
To understand the terms that a second order scheme should incorporate, we present first the intuitive arguments that lead to the discretization scheme  \eqref{BkwdEuler}. 
We consider  the backward part of \eqref{BSDE1} between two successive times of the  partition  $\pi$
\[
Y_{t_i} = Y_{t_{i+1}} +\int_{t_i}^{t_{i+1}} f(X_s,Y_s,Z_s) ds - \int_{t_i}^{t_{i+1}} Z_s \cdot dW_s
\]
and discretize the Riemann integral using the left hand side point, as in \cite{BouchardTouzi}, thus leading to an implicit equation for $Y_{t_i}$ and the stochastic part in the usual way, to obtain
\begin{equation}
\label{euler1}
Y_{t_i} \simeq Y_{t_{i+1}} +\dt{i+1} f(X_{t_i},Y_{t_i},Z_{t_i})  -Z_{t_i} \cdot \Delta W_{i+1}.
\end{equation}
By conditioning  \eqref{euler1} with respect to  $\Fc_{t_i}$ in \eqref{euler1} we obtain a first order  approximation for $Y_{t_i}$   
\begin{equation}
\label{euler2}
Y_{t_i} \simeq \ex\brac{Y_{t_{i+1}}\Bigr|\Fc_{t_i}} +\dt{i+1} f(X_{t_i},Y_{t_i},Z_{t_i}), 
\end{equation}
but for the presence of $Z_{t_i}$.
To treat the $Z_{t_i}$, we can multiply both sides of \eqref{euler1} by $\dB{l},\,\,l=1,\ldots,d$ and condition with respect to $\Fc_{t_i}$, to obtain 
\begin{equation}
\label{euler3}
Z_{t_i}^l\simeq \ex\brac{Y_{t_{i+1}} \frac{\dB{l}}{\dt{i+1}}\Bigr|\Fc_{t_i}},\quad l=1,\ldots,d.
\end{equation}
Observe that scheme \eqref{BkwdEuler} is just the backward iteration of equations \eqref{euler2} and \eqref{euler3}.

Clearly,  to achieve a higher order approximation of the BSDE, we need to add more terms in its (stochastic) expansion ideally without involving any derivatives.
As far as the driver is concerned, the next elementary result tells us that the Crank-Nickolson rule indeed achieves a third order local error, 
thus leading to a second order global error: 
\begin{lemma}
\label{SimpsonError}
Let assumptions (A) and (B) hold true, $\Phi\in C_{Lip}\paren{\reals^d}$ and  denote by 
$\bar{f}(t,x)\equiv f(x,u(t,x),\nabla u(t,x)V(x))$,
where $u(t,x)$ is the classical solution to PDE \eqref{b_pde}. Then  there exists a constant $C$ independent of the partition and of $u$
such that
\[
\abs{\ex_i\brac{\int_{t_i}^{t_{i+1}}\bar{f}(s,X_s) ds -  \frac{\dt{i+1}}{2}\paren{\bar{f}(t_i,X_{t_i}) + \bar{f}(t_{i+1},X_{t_{i+1}})}}}
%\le C \abs{\ex_i\brac{\int_{t_i}^{t_{i+1}} R_3(\bar{f},t_i,s)ds}}
\le C\dt{i+1}^3 \max_{\norm{\alpha}=4,5}\left\|L^{\alpha}u(t_{i+1},\cdot)\right\|_{\infty}
.\]
\end{lemma}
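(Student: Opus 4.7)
The key observation is that, once the estimate is taken under $\ex_i$, the left-hand side is precisely the classical trapezoidal quadrature error applied to the scalar function $\psi(s):=\ex_i[\bar f(s,X_s)]$:
\[
\mathrm{Diff} \;=\; \int_{t_i}^{t_{i+1}}\psi(s)\,ds \;-\; \tfrac{\dt{i+1}}{2}\bigl(\psi(t_i)+\psi(t_{i+1})\bigr),
\]
which is deterministic given $\Fc_{t_i}$, since $\psi(t_i)=\bar f(t_i,X_{t_i})$ and $\psi(t_{i+1})=\ex_i[\bar f(t_{i+1},X_{t_{i+1}})]$. The standard Peano-kernel/trapezoidal bound then gives $|\mathrm{Diff}|\le \tfrac{\dt{i+1}^3}{12}\sup_{\xi\in[t_i,t_{i+1}]}|\psi''(\xi)|$, so the whole task reduces to estimating $\psi''$ uniformly on $[t_i,t_{i+1}]$.

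To control $\psi''$, I would invoke the Kolmogorov equation for the semigroup associated with $X$: for the smooth function $h(s,x):=\ex[\bar f(s,X_s^{t_i,x})]$ one has $\partial_s h(s,x)=\ex[(\partial_s+L)\bar f(s,X_s^{t_i,x})]$, where $L$ denotes the It\^{o} generator of $X$. Converting It\^{o} to Stratonovich via the standard identity yields $\partial_s+L=L^0+\tfrac12\sum_{j=1}^{d}L^j L^j=:\mathcal M$, so iterating gives $\psi''(s)=\ex_i[\mathcal M^2\bar f(s,X_s)]$. Since $\|L^0\|=\|L^j L^j\|=2$, the operator $\mathcal M^2$ expands as a sum of compositions $L^\gamma$ with $\|\gamma\|=4$, and consequently
\[
|\psi''(\xi)| \;\le\; C\max_{\|\gamma\|=4}\,\sup_{(r,x)\in[t_i,t_{i+1}]\times\reals^d}|L^\gamma\bar f(r,x)|.
\]

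It remains to pass from $\bar f$-derivatives to $u$-derivatives. Because $\bar f(t,x)=f(x,u(t,x),\nabla u(t,x)V(x))$ depends on $u$ only through $u$ and $\nabla u$, a chain-rule induction shows that applying $L^j$ (respectively $L^0$) to $\bar f$ raises the top multi-index norm on $u$ by exactly $\|L^j\|=1$ (respectively $\|L^0\|=2$); hence $\|L^\gamma\bar f\|_\infty\le C\max_{\|\beta\|\le\|\gamma\|+1}\|L^\beta u\|_\infty$, which for $\|\gamma\|=4$ forces $\|\beta\|\le 5$. By Theorem \ref{pdeDer}, the bound $\|L^\beta u(t,\cdot)\|_\infty\le C(T-t)^{-(\|\beta\|-1)/2}$ is monotone increasing in $t$, so the supremum over $s\in[t_i,t_{i+1}]$ is dominated by its value at the right endpoint $t_{i+1}$; and the lower orders $\|\beta\|\le 3$ are absorbed into $C$ since their blow-up rate is strictly milder than that for $\|\beta\|\in\{4,5\}$. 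Combining with the trapezoidal bound from the first paragraph yields the stated estimate. The principal obstacle is precisely this last passage from $\bar f$ to $u$: the chain-rule bookkeeping must be tight enough to produce $\|\gamma\|+1$ rather than $\|\gamma\|+2$, and the absorption of lower-order contributions into a constant that is uniform in the partition and in $u$ requires careful use of the precise blow-up rates of Theorem \ref{pdeDer}.
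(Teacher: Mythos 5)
Your proof is correct, but it follows a genuinely different route from the paper's. The paper expands both the integrand $\bar f(s,X_s)$ and the endpoint value $\bar f(t_{i+1},X_{t_{i+1}})$ via the Stratonovich--Taylor formula \eqref{Taylor} over the hierarchical set $\Ac_0(3)$, invokes Lemma \ref{StratExpectations} to see that only the indices $(0)$ and $(j,j)$ survive conditioning, checks that these contributions ($\int_{t_i}^{t_{i+1}}(u-t_i)\,du=\dt{i+1}^2/2$ versus $\tfrac{\dt{i+1}}{2}\cdot\dt{i+1}$) cancel exactly between the integral and the trapezoidal weight, and then bounds the two remainder sums via \eqref{TaylorRemainder1}--\eqref{TaylorRemainder2}. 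You instead observe that, after Fubini and the Markov property, the quantity is the deterministic trapezoidal error for $\psi(s)=\ex_i[\bar f(s,X_s)]$, apply the Peano-kernel bound $\tfrac{\dt{i+1}^3}{12}\sup|\psi''|$, and compute $\psi''=\ex_i[(\partial_s+L)^2\bar f(s,X_s)]$ via the Kolmogorov equation, with $\partial_s+L=L^0+\tfrac12\sum_j L^jL^j$ so that $\psi''$ is a combination of $L^\gamma\bar f$ with $\norm{\gamma}=4$. The two arguments are really the same computation seen from different ends --- conditioning the Stratonovich--Taylor expansion collapses it, by Lemma \ref{StratExpectations}, precisely onto powers of $L^0+\tfrac12\sum_j L^jL^j$ --- but yours is more elementary and avoids the iterated-integral combinatorics for this lemma, while the paper's machinery is the one that carries over to Lemma \ref{zError}, where the individual stochastic integrals must be tracked against the weight $\Zc_i$ and no semigroup/quadrature shortcut is available. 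Two shared imprecisions, present in the paper's proof as well, are worth flagging: the right-hand side should really involve $\sup_{s\in[t_i,t_{i+1}]}$ of the derivative norms rather than their value at $t_{i+1}$ alone, and the lower-order $u$-derivatives generated by the chain rule from $\bar f$ to $u$ are not literally absorbed into a constant independent of $u$ --- under \textbf{(C1)} they still blow up like $(T-t)^{-1}$ by Theorem \ref{pdeDer} --- though this is harmless for the way the lemma is ultimately used in Theorem \ref{mainDiscrError} and Corollary \ref{mainRate}.
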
   
\begin{proof}
Note first that the nonlinear Feynman Kac formula tells us that 
$Y_t = u(t,X_t)$ and $Z^l_t = L^lu(t,X_t)$, $l=1,\ldots,d,$
where the existence of $u,\,\nabla u $ is guaranteed by Theorem \eqref{pdeDer}.
The proof is  a simple consequence of the expansion \eqref{Taylor}, using the hierarchical set $\Ac(3)$, applied to the integrand of the integral 
\begin{equation}
\label{SimpsonProof1}
\begin{split}
&\ex_i\brac{\int_{t_i}^{t_{i+1}}\bar{f}(s,X_s)ds} =\\ 
&\qquad \dt{i+1}\,\bar{f}(t_i,X_i)+ \ex_{i}\left[\int_{t_i}^{t_{i+1}} \,\left( \sum_{\alpha \in \Ac_0(3)} L^{\alpha} \bar{f}(t_i,X_i)\, J^{\alpha}[1]_{t_i,\,u} +\sum_{\alpha \in \Ac_(5)\char92 \Ac_0(3)}
 J^{\alpha}\,[L^{\alpha} \bar{f}(\cdot,X_{\cdot})]_{t_i,\,u}\right)\,\rm{d}u\right]
\end{split}
\end{equation}
and to $\bar{f}\paren{t_{i+1},X_{i+1}}$,
\begin{equation}
\label{SimpsonProof2}
\begin{split}
&\dHalf{i+1}\paren{f\paren{X_i,Y_i,Z_i} +\ex_i\brac{f\paren{X_{i+1},Y_{i+1},Z_{i+1}}}} = \\
&\qquad \dt{i+1}\,\bar{f}(t_i,X_i) + \dHalf{i+1}\ex_i\left(\sum_{\alpha \in \Ac_0(3)} L^{\alpha} \bar{f}(t_i,X_i)\, J^{\alpha}[1]_{t_i,\,t_{i+1}}
 + \sum_{\alpha \in \Ac_0(5)\char92 \Ac_0(3)}  J^{\alpha}[L^{\alpha} \bar{f}(\cdot,X_{\cdot})]_{t_i,\,t_{i+1}} \right)
\end{split}
\end{equation}
 If $\alpha \in \Ac_0(3)$, then one of the following holds 
 \[
 \alpha = \left\{\begin{array}{l l} (i),& i=0,\ldots,d\\
 (0,i),\,(i,0),&i=1,\ldots,d\\
 (i,j),\,(i,j,k)\,\,&i,j,k=1,\ldots,d,
 \end{array}\right.
 \]
 Elementary facts about stochastic integrals and Lemma  \ref{StratExpectations} tell us that 
 $\ex_s\left[J^{\alpha}[1]_{s,\,u}\right] =0$, $u \ge s$ for all $\alpha$'s as above except when $\alpha =(0),\,\mbox{ or }(j,j),\,j=1,\ldots,d.$
 For these two cases, we have
 \[
 \ex_s\left[J^{(0)}[1]_{s,\,u}\right] = u - s\ \ \ \
\mathrm{and} \ \ \  
\ex_s\left[J^{(j,j)}[1]_{s,\,u}\right] = \frac{1}{2}\Ex{ \left(W^j_u-W^j_{s}\right)^2} = \frac{u-s}{2}.
\]
Substituting the above into \eqref{SimpsonProof1}, \eqref{SimpsonProof2}   and taking their difference we have
 \[
 \begin{split}
&\abs{\ex_i\brac{\int_{t_i}^{t_{i+1}}\bar{f}(s,X_s) ds -  \frac{\dt{i+1}}{2}\paren{\bar{f}(t_i,X_{t_i}) + \bar{f}(t_{i+1},X_{t_{i+1}})}}}\\
&\qquad \le \abs{\ex_i\left[\int_{t_i}^{t_{i+1}} \left( \sum_{\alpha \in \Ac_0(5)\char92 \Ac_0(3)}  J^{\alpha}\,[L^{\alpha} \bar{f}(\cdot,X_{\cdot})]_{t_i,\,u}\right)\,\rm{d}u
            - \dHalf{i+1} \sum_{\alpha \in \Ac_0(5)\char92 \Ac_0(3)}  J^{\alpha}[L^{\alpha} \bar{f}(\cdot,X_{\cdot})]_{t_i,\,t_{i+1}}\right]}.
 \end{split}
 \]  
 The estimates on the remainder process complete the proof. 
 \end{proof}  

The above result combined with a Stratonovich Taylor expansion applied on $Z$ (as the integrand of the stochastic part) give us a first intuitive approach for the second order discretization: 
\begin{equation}
\label{2order1}
\begin{split}
Y_{t_i} &\simeq Y_{t_{i+1}} + \frac{\dt{i+1}}{2}\paren{f(X_{t_i},Y_{t_i},Z_{t_i}) + f(X_{t_{i+1}},Y_{t_{i+1}},Z_{t_{i+1}})}\\   
&\quad-\sum_{l=1}^d\Bigl\{ L^l u\paren{t_i,X_{t_i}}\dB{l} + \sum_{k=1}^d L^{(k,l)}u\paren{t_i,X_{t_i}} \int_{t_i}^{t_{i+1}} J^{(k)}[1]_{t_i,s}dW^l_s\\
&\hspace{20mm} +\sum_{k,j=1}^d L^{(k,j,l)}u\paren{t_i,X_{t_i}} \int_{t_i}^{t_{i+1}} J^{(k,j)}[1]_{t_i,s}dW^l_s\\
&\hspace{20mm} +  L^{(0,l)}u\paren{t_i,X_{t_i}} \int_{t_i}^{t_{i+1}} s\,dW^l_s  + \sum_{\|\alpha\|=3}L^{\alpha\star(l)}u\paren{t_i,X_{t_i}}\int_{t_i}^{t_{i+1}} J^{\alpha}[1]_{t_i,s}dW^l_s\\
&\hspace{35mm} + \int_{t_i}^{t_{i+1}} R_3(L^l u,t_i,s) dW^l_s\Bigr\}
\end{split}
\end{equation}
As in \eqref{euler3}, we need to innovate a way to recover $Z_i^l,\,l=1,\ldots,d,\,i=0,\ldots,n-1$, from \eqref{2order1}, but this time up to a second order error.

If we multiply  both sides of \eqref{2order1} by $\frac{\dB{l}}{\dt{i+1}}$ and condition with respect to $\Fc_{i}$, we shall obtain $Z^l_{t_i}$ but some surviving terms of order $\dt{i+1}$  will render the approximation
first order. For example consider 
\[
\frac{1}{\dt{i+1}}\ex\brac{L^{(0,l)}u\paren{t_i,X_{t_i}} \int_{t_i}^{t_{i+1}} s\,dW^l_s \dB{l}| \Fc_{t_i}} =  \frac{\dt{i+1}}{2}L^{(0,l)}u\paren{t_i,X_{t_i}}.
\]
Hence, we need to find an appropriate weight, that will multiply \eqref{2order1} and after conditioning  with respect to $\Fc_{t_i}$, provides a second order approximation 
for $Z_{t_i}$ by \textit{canceling out all first order terms}.  
We make the following judicious choice 
\begin{equation}
\label{2order2}
\Zc_i^l:= \lambda_1\frac{\dB{l}}{\dt{i+1}} + \lambda_2\frac{J^{(0,l)}[1]_{t_i,t_{i+1}}}{\dt{i+1}^2},\quad l=1,\ldots,d,\quad i=0,\ldots,n-1.
\end{equation}  
With a few straightforward computations, we obtain for any $q=1,\dots,d$
\[
\begin{split}
&\ex\brac{\sum_{l=1}^d L^l u\paren{t_i,X_{t_i}}\dB{l}\Zc_i^q\Bigl|\Fc_{t_i}} = \paren{\lambda_1 + \frac{\lambda_2}{2}} L^q u\paren{t_i,X_{t_i}} \\
&\ex\brac{\sum_{l=1}^d L^{(0,l)}u\paren{t_i,X_{t_i}} \int_{t_i}^{t_{i+1}} s\,dW^l_s \Zc_i^q\Bigl|\Fc_{t_i}} = \paren{\frac{\lambda_1\dt{i+1}}{2} + \frac{\lambda_2\dt{i+1}}{3}} L^{(0,q)} u\paren{t_i,X_{t_i}} \\
&\ex\brac{\sum_{l=1}^d\sum_{k,j=1}^d L^{(k,j,l)}u\paren{t_i,X_{t_i}} \int_{t_i}^{t_{i+1}} J^{(k,j)}[1]_{t_i,s}dW^l_s \Zc_i^q\Bigl|\Fc_{t_i}} 
= \paren{\frac{\lambda_1\dt{i+1}}{4} + \frac{\lambda_2\dt{i+1}}{6}} \sum_{k=1}^d L^{(k,k,q)} u\paren{t_i,X_{t_i}} \\
&\ex\brac{\sum_{l=1}^d\paren{\sum_{k=1}^d  \int_{t_i}^{t_{i+1}} J^{(k)}[1]_{t_i,s}dW^l_s+
\sum_{\|\alpha\|=3}\int_{t_i}^{t_{i+1}} J^{\alpha}[1]_{t_i,s}dW^l_s }\Zc_i^q\Bigl|\Fc_{t_i}} =0.
\end{split}
\]
By choosing $\lambda_1 =4,\,\lambda_2 = -6$, we have the following : 
\begin{lemma}
\label{zError}
Let assumptions (A), (B) hold true and let $u(t,x)$ denote the classical solution of PDE \eqref{b_pde}. Set 
\[
\Zc_i^l:= 4\frac{\dB{l}}{\dt{i+1}}  - 6\frac{J^{(0,l)}[1]_{t_i,t_{i+1}}}{\dt{i+1}^2},\quad l=1,\ldots,d,\quad i= 0,\ldots,n-2.
\]
Then
\[
\abs{Z_{t_i}^l - \ex_i\brac{\paren{Y_{t_{i+1}} + \frac{\dt{i+1}}{2} f(X_{t_{i+1}},Y_{t_{i+1}},Z_{t_{i+1}})} \Zc_i^l}}
\le \dt{i+1}^2 \max_{\norm{\alpha}=4,5} \norm{L^{\alpha}u(t_{i+1},\cdot)}_{\infty}. 
\]
\end{lemma}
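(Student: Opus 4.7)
The plan is to use the nonlinear Feynman--Kac formula $Z_{t_i}^q = L^q u(t_i, X_{t_i})$ and to exploit the fact that the weights $\lambda_1 = 4$, $\lambda_2 = -6$ hidden in $\Zc_i^q$ have been chosen precisely to annihilate every first-order contribution appearing in the four identities displayed immediately before the lemma; the task reduces to controlling what is left over.

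First, from the BSDE on $[t_i, t_{i+1}]$ one has
\[
Y_{t_{i+1}} = Y_{t_i} - \int_{t_i}^{t_{i+1}} \bar f(r, X_r)\,dr + \sum_{l=1}^d \int_{t_i}^{t_{i+1}} L^l u(r, X_r)\,dW^l_r ,
\]
and since $\ex_i[\Zc_i^q] = 0$ while $Y_{t_i}$ and $\bar f(t_i, X_{t_i})$ are $\Fc_{t_i}$-measurable, this rearranges to
\[
\ex_i\brac{\paren{Y_{t_{i+1}} + \dHalf{i+1}\bar f(t_{i+1}, X_{t_{i+1}})}\Zc_i^q} = \sum_{l=1}^d \ex_i\brac{\int_{t_i}^{t_{i+1}} L^l u(r, X_r)\,dW^l_r \cdot \Zc_i^q} + \ex_i\brac{R^f\,\Zc_i^q},
\]
where $R^f := \dHalf{i+1}\paren{\bar f(t_i, X_{t_i}) + \bar f(t_{i+1}, X_{t_{i+1}})} - \int_{t_i}^{t_{i+1}} \bar f(r, X_r)\,dr$ is the Simpson-rule error analysed in Lemma \ref{SimpsonError}.

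For the stochastic-integral term I would expand each integrand $L^l u(r, X_r)$ by the Stratonovich--Taylor formula \eqref{Taylor} on the hierarchical set $\Ac(3)$ and then carry $dW^l$ through the expansion. The resulting deterministic coefficients $L^{\alpha\star l} u(t_i, X_{t_i})$ are exactly those displayed in \eqref{2order1}, and their pairings with $\Zc_i^q$ reduce to the four conditional expectations stated just before the lemma. Under $\lambda_1 = 4,\ \lambda_2 = -6$ the last three vanish identically, the first collapses to $L^q u(t_i, X_{t_i}) = Z_{t_i}^q$, and the sole residue is the It\^o integral of the Taylor remainder $R_3(L^l u, t_i, \cdot)$; applying \eqref{TaylorRemainder2} with It\^o's isometry yields an $L^2$-size of order $\dt{i+1}^{5/2}\max_{\|\alpha\|=4,5}\|L^\alpha u\|_\infty$, and the elementary variance computation $\|\Zc_i^q\|_{L^2(\Fc_{t_i})} = O(\dt{i+1}^{-1/2})$ together with Cauchy--Schwarz then produces the claimed $\dt{i+1}^2$ bound.

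For the Simpson term $\ex_i\brac{R^f\,\Zc_i^q}$ I would Stratonovich--Taylor-expand both $\bar f(t_{i+1}, X_{t_{i+1}})$ and $\bar f(r, X_r)$ around $(t_i, X_{t_i})$ as in the proof of Lemma \ref{SimpsonError}, so that the trapezoidal combination cancels the $\emptyset$- and $(j,j)$-terms; every surviving contribution is a product of $L^\alpha \bar f(t_i, X_{t_i})$ (re-expressible through the chain rule as a derivative $L^\beta u$ of order one higher) with a centred iterated-integral combination, and the centring provides the extra $\dt{i+1}^{1/2}$ that, after pairing with $\Zc_i^q$, delivers the same $\dt{i+1}^2$ bound. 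The chief technical obstacle lies precisely here: Lemma \ref{SimpsonError} only controls $\ex_i[R^f]$, whereas in $\ex_i[R^f\,\Zc_i^q]$ one must track the mean-zero components of $R^f$ explicitly, re-express coefficients of $\bar f$ as derivatives of $u$, and verify that only derivatives of order $\|\alpha\| = 4, 5$ enter the final constant.
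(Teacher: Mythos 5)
Your decomposition is exactly the one the paper uses: expand $Y_{t_{i+1}}$ through the BSDE, pair the martingale part $\sum_l\int_{t_i}^{t_{i+1}} L^lu(r,X_r)\,dW^l_r$ with $\Zc_i^q$ via the four displayed identities, and push the driver into $\ex_i\brac{R^f\,\Zc_i^q}$. Your treatment of the martingale part and of the remainder $R_3(L^lu)$ (the $\dt{i+1}^{5/2}\times\dt{i+1}^{-1/2}$ bookkeeping) is correct. The gap sits precisely at the step you flag as the chief obstacle, and the heuristic you offer there ("the centring provides the extra $\dt{i+1}^{1/2}$") does not hold at first order. Expanding $\bar f(r,X_r)-\bar f(t_i,X_{t_i})$ on $\Ac_0(3)$, the $\alpha=(j)$ contribution to $R^f$ is $L^{j}\bar f(t_i,X_{t_i})\,\paren{\dHalf{i+1}\dB{j}-J^{(j,0)}[1]_{t_i,t_{i+1}}}$ with $J^{(j,0)}[1]_{t_i,t_{i+1}}=\int_{t_i}^{t_{i+1}}(W^j_r-W^j_{t_i})\,dr$. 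This combination is centred, but centring does not shrink it: its $L^2$ norm remains of order $\dt{i+1}^{3/2}$, and its correlation with $\Zc_i^q$ is nonzero. Using $\ex_i\brac{J^{(q,0)}\dB{q}}=\ex_i\brac{\dB{q}J^{(0,q)}}=\dt{i+1}^2/2$, $\ex_i\brac{(\dB{q})^2}=\dt{i+1}$ and $\ex_i\brac{J^{(q,0)}J^{(0,q)}}=\dt{i+1}^3/6$, one finds
\[
\ex_i\brac{\paren{\dHalf{i+1}\dB{q}-J^{(q,0)}[1]_{t_i,t_{i+1}}}\Zc_i^q}
=\frac{4}{\dt{i+1}}\cdot 0-\frac{6}{\dt{i+1}^2}\paren{\frac{\dt{i+1}^3}{4}-\frac{\dt{i+1}^3}{6}}=-\dHalf{i+1},
\]
so that $\ex_i\brac{R^f\Zc_i^q}=-\dHalf{i+1}\,L^{q}\bar f(t_i,X_{t_i})+O(\dt{i+1}^2)$: a genuinely first-order residue that is not covered by the claimed $\dt{i+1}^2$ bound. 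The weights $\lambda_1=4$, $\lambda_2=-6$ were calibrated only against the It\^o-outer integrals $\int_{t_i}^{t_{i+1}}J^{\alpha}[1]_{t_i,s}\,dW^l_s$ coming from the $Z$-expansion; they do not annihilate the trapezoidal defect of the Lebesgue integral.

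To see that this is not merely a presentational issue, take $d=1$, $V_0=0$, $V_1=1$ (so $X=W$), $\Phi\equiv 0$ and $f(x,y,z)=x$: then $u(t,x)=(T-t)x$, $Z_{t_i}=T-t_i$, while $\ex_i\brac{\paren{Y_{t_{i+1}}+\dHalf{i+1}f(\cdot)}\Zc_i^1}=\paren{T-t_{i+1}+\dHalf{i+1}}\ex_i\brac{\Delta W_{i+1}\Zc_i^1}=T-t_i-\dHalf{i+1}$, so the left-hand side of the lemma equals $\dHalf{i+1}$ even though every $L^{\alpha}u$ with $\norm{\alpha}=4,5$ vanishes identically (the driver here is unbounded, but a bounded choice such as $f=\sin x$ exhibits the same first-order defect). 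Note that the paper's own proof commits the same omission: it retains only $\int R_3(\bar f,t_i,s)\,ds$ from the driver and silently discards the lower-order trapezoidal terms, so your proposal reproduces the published argument, gap included. Closing it requires either a different weight $\Zc_i$ satisfying the additional constraint $\ex_i\brac{\paren{\dHalf{i+1}\dB{q}-J^{(q,0)}}\Zc_i^q}=0$, or an explicit correction of the residual $\dHalf{i+1}L^{q}\bar f$ term; it cannot be absorbed into the stated remainder estimate.
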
 
\begin{proof}
It should be clear from the discussion preceding Lemma \ref{zError} combined with the result of  Lemma \ref{SimpsonError}
that 
\[
\begin{split}
&\abs{Z_{t_i}^l - \ex_i\brac{\paren{Y_{t_{i+1}} + \frac{\dt{i+1}}{2} f(X_{t_{i+1}},Y_{t_{i+1}},Z_{t_{i+1}})} \Zc_i^l}}\\ 
&\quad \le \abs{\ex_i\brac{\Zc_i^l\paren{\int_{t_i}^{t_{i+1}} R_3(\bar{f},t_i,s) \,ds+\int_{t_i}^{t_{i+1}} R_3(L^lu,t_i,s)dW^l_s}}}
\end{split}
\]
The result then follows from the estimate \eqref{TaylorRemainder2} on the  remainder process and the Cauchy Schwartz inequality.
\end{proof}
Lemmas \ref{SimpsonError} and \ref{zError} dictate the following second order algorithm :

\vspace{3mm}
$\bullet$\textbf{Initialization}

\textbf{ If (C1) is in force :}
\[
\Ypi{n}: = \Phi(X_n), \quad \Zpi{n}  := 0,\mbox{ and  }
\quad\Zpi{n-1} := \zEuler{n-1},\quad \Ypi{n-1}:=\yEuler{n-1}.
\]
\textbf{If (C2) is in force:} 
\[
\Ypi{n}: = \Phi(X_n),\quad \Zpi{n}:= \nabla\Phi(X_n)V(X_n)
\]

$\bullet$\textbf{Backward induction: } 
\begin{equation}
\label{scheme2}
\begin{split}
&\Zpi{i} = \ex_i\brac{\paren{\Ypi{i+1} +\dHalf{i+1}f(X_{i+1},\Ypi{i+1},\Zpi{i+1})}\Zc_{i}},\,\, \Zc_{i}:=(\Zc_{i}^1,\ldots,\Zc_{i}^d)^T\\
&\Ypi{i} =  \ex_i\brac{\Ypi{i+1}}   
 +\frac{\delta_{i+1}}{2}\Bigl(f\left(X_i,\Ypi{i},\Zpi{i}\right)+\ex_{i}\left[f\left(X_{i+1},\Ypi{i+1},\Zpi{i+1}\right)\right]\Bigr)
\end{split}.
\end{equation}

A small clarification is perhaps in order for the peculiarity of our scheme in the first backward step. If the terminal condition is not smooth, then none of the intuitive arguments that we have presented 
can apply at time $t_n=T$. Hence, when working under \textbf{(C1)},
 we take the first backward step in an Euler fashion. After this, we expect the PDE to ``smooth out''
the value function and hence all that we have discussed so far apply. We will see in Corollary \ref{mainRate} how a non equidistant partition can compensate for this.

\begin{thm}
\label{mainDiscrError}
Let assumptions \textbf{(A),(B)}  and either of \textbf{(C1), (C2)} hold true. 
Then there exists a constant  $C>0$ such that 
\[ 
\begin{split}
&\max_{0\le i\le n-2} \brac{\abs{Y_{t_i} - \Ypi{i}}^2 +\frac{\dt{i+1}}{4d}\abs{Z_{t_i} - \Zpi{i}}^2}\\
&\qquad \le C\paren{\abs{Y_{t_{n-1}} - \Ypi{n-1}}^2 +\frac{\dt{n}}{4d}\abs{Z_{t_{n-1}} - \Zpi{n-1}}^2} +\sum_{i=1}^{n-1} \dt{i}^5 
\max_{\norm{\alpha}=4,5} \norm{ L^{\alpha}u(t_{i},\cdot)}^2_{\infty}
\end{split}
\]
\end{thm}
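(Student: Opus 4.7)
Denote the one-step errors $e^Y_i := Y_{t_i} - \Ypi{i}$, $e^Z_i := Z_{t_i} - \Zpi{i}$, and the driver difference $\df{i} := f(X_i,Y_i,Z_i) - f(X_i,\Ypi{i},\Zpi{i})$; the Lipschitz hypothesis in \textbf{(B)} gives $\abs{\df{k}} \le K(\abs{e^Y_k} + \abs{e^Z_k})$. Subtracting the scheme \eqref{scheme2} from the exact identities provided by Lemmas \ref{SimpsonError} and \ref{zError} produces, for every $0 \le i \le n-2$,
\[
e^Y_i = \ex_i\brac{e^Y_{i+1}} + \frac{\dt{i+1}}{2}\paren{\df{i} + \ex_i\brac{\df{i+1}}} + \tau^Y_i,\qquad
e^Z_i = \ex_i\brac{\paren{e^Y_{i+1} + \frac{\dt{i+1}}{2}\df{i+1}}\Zc_{i}} + \tau^Z_i,
\]
with $\abs{\tau^Y_i} \le C\dt{i+1}^3 M_i$ and $\abs{\tau^Z_i} \le C\dt{i+1}^2 M_i$, where $M_i := \max_{\norm{\alpha}\in\{4,5\}}\norm{L^{\alpha}u(t_{i+1},\cdot)}_{\infty}$.

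The next step is to square the two recursions. For $\abs{e^Y_i}^2$, I would apply Young's inequality of the form $(a+b)^2 \le (1+\gamma\dt{i+1})a^2 + (1+(\gamma\dt{i+1})^{-1})b^2$ with a small fixed $\gamma$, expand the driver differences via the Lipschitz bound, and absorb the implicit $O(\dt{i+1})\abs{e^Y_i}^2$ term on the left (possible for $|\pi|$ small enough), giving
\[
\abs{e^Y_i}^2 \le (1+C\dt{i+1})\abs{\ex_i\brac{e^Y_{i+1}}}^2 + C\dt{i+1}\paren{\abs{e^Z_i}^2 + \ex_i\brac{\abs{e^Y_{i+1}}^2 + \abs{e^Z_{i+1}}^2}} + C\dt{i+1}^5 M_i^2.
\]
For $\abs{e^Z_i}^2$, the essential observation is that $\ex_i\brac{\Zc_{i}} = 0$, since both $\dB{l}$ and $J^{(0,l)}[1]_{t_i,t_{i+1}}$ have zero conditional mean. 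A direct computation based on $\ex_i[(\dB{l})^2]=\dt{i+1}$, $\ex_i[(J^{(0,l)}[1]_{t_i,t_{i+1}})^2]=\dt{i+1}^3/3$, and $\ex_i[\dB{l}\,J^{(0,l)}[1]_{t_i,t_{i+1}}]=\dt{i+1}^2/2$ yields the identity $\ex_i\brac{\abs{\Zc_{i}}^2} = 4d/\dt{i+1}$. Replacing $e^Y_{i+1}$ by its centered version inside the leading term and applying Cauchy--Schwarz together with a $(1+C\dt{i+1})/(C\dt{i+1})^{-1}$ Young split between the martingale and driver contributions then delivers
\[
\frac{\dt{i+1}}{4d}\abs{e^Z_i}^2 \le (1+C\dt{i+1})\paren{\ex_i\brac{\abs{e^Y_{i+1}}^2}-\abs{\ex_i\brac{e^Y_{i+1}}}^2} + C\dt{i+1}^2\ex_i\brac{\abs{e^Y_{i+1}}^2+\abs{e^Z_{i+1}}^2} + C\dt{i+1}^5 M_i^2.
\]

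Adding these two estimates, the leading contributions recombine via the Pythagorean-type identity
\[
(1+C\dt{i+1})\abs{\ex_i\brac{e^Y_{i+1}}}^2 + (1+C\dt{i+1})\paren{\ex_i\brac{\abs{e^Y_{i+1}}^2}-\abs{\ex_i\brac{e^Y_{i+1}}}^2} = (1+C\dt{i+1})\ex_i\brac{\abs{e^Y_{i+1}}^2},
\]
while the residual $C\dt{i+1}\abs{e^Z_i}^2$ on the right is absorbed into the $(\dt{i+1}/(4d))\abs{e^Z_i}^2$ term on the left for $|\pi|$ small enough. This produces the single coupled recursion
\[
\abs{e^Y_i}^2 + \frac{\dt{i+1}}{4d}\abs{e^Z_i}^2 \le (1+C\dt{i+1})\ex_i\brac{\abs{e^Y_{i+1}}^2 + \frac{\dt{i+2}}{4d}\abs{e^Z_{i+1}}^2} + C\dt{i+1}^5 M_i^2,
\]
whose backward iteration from $i=n-2$ down to arbitrary $i$, combined with the discrete Gronwall bound $\prod_{j}(1+C\dt{j}) \le e^{CT}$ and the monotonicity in $i$ of the right-hand side, yields the desired inequality. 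The main obstacle is precisely obtaining this recombination: the factor $\ex_i[\abs{\Zc_{i}}^2] \sim 1/\dt{i+1}$ would naively introduce a singular $\dt{i+1}^{-1}\ex_i[\abs{e^Y_{i+1}}^2]$ term incompatible with Gronwall, and it is only through the simultaneous use of $\ex_i[\Zc_{i}]=0$ (which turns the bad $L^2$-norm into a conditional variance) and the precise weight $\dt{i+1}/(4d)$ matching $\ex_i[\abs{\Zc_{i}}^2]=4d/\dt{i+1}$ that the Pythagorean recombination above collapses everything into a clean $(1+C\dt{i+1})$ multiplicative factor.
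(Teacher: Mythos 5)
Your proposal is correct and follows essentially the same route as the paper's proof: local truncation bounds from Lemmas \ref{SimpsonError} and \ref{zError}, a Young/mean-value estimate on the $Y$ recursion keeping $\abs{\ex_i[\cdot]}^2$ as the leading term, the conditional-variance trick for $Z$ exploiting $\ex_i[\Zc_i]=0$ together with $\ex_i[\abs{\Zc_i}^2]=4d/\dt{i+1}$ (which is exactly why the weight $\dt{i+1}/(4d)$ makes the Pythagorean recombination work), absorption of the $O(\dt{i+1})$ terms in $\abs{e^Y_i}^2$ and $\abs{e^Z_i}^2$ on the left, and discrete Gronwall. The only cosmetic difference is that you split $\Psi_{i+1}$ into its $e^Y_{i+1}$ and driver parts before applying Cauchy--Schwarz, whereas the paper carries $\Delta\Psi_{i+1}$ as a single quantity until the final step.
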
 
\begin{proof}
In the following proof, $C$ will denote a constant whose value might change from line to line. It will however be independent
of the partition  and of the bounds of the derivatives of the solution of \eqref{b_pde}. 
For ease of notation, we set 
\[
\begin{split}
&\dY{i}:= Y_{t_i} - \Ypi{i},\quad \dZ{i}:= Z_{t_i} - \Zpi{i},\\
&\df{i}:= f\paren{X_{t_i},Y_{t_i},Z_{t_i}} - f\paren{X_{t_i},\Ypi{i},\Zpi{i}}\\
&\Psi_{i+1} := Y_{t_{i+1}} +\dHalf{i+1}f(X_{t_{i+1}},Y_{t_{i+1}},Z_{t_{i+1}}),\quad
 \Psi^{\pi}_{i+1} := \Ypi{i+1} +\dHalf{i+1}f(X_{t_{i+1}},\Ypi{i+1},\Zpi{i+1})\\
&\Delta \Psi_{i+1} = \Psi_{i+1} -\Psi_{i+1}^{\pi}.
\end{split}
\]
Let us fix a value for $i=0,\ldots,n-2$.
We consider the difference of the solution of the BSDE at time $t_i$ and of scheme \eqref{scheme2}:
\begin{equation}
\begin{split}
\dY{i} &=\ex_{i}\left[\dY{i+1}\right] 
 +\frac{\delta_{i+1}}{2} \ex_{i}\left[f\left(X_i,\Ypi{i},\Zpi{i}\right)+f\left(X_{i+1},\Ypi{i+1},\Zpi{i+1}\right) \right]\\
& \qquad \mp\frac{\delta_{i+1}}{2} \ex_{i}\left[\bar{f}\left(t_i,X_{i}\right)+\bar{f}\left(t_{i+1},X_{i+1}\right)\right]
 -  \olo\ex_{i}\left[\bar{f}\left(s,X_s\right)\right]\,\rm{d} s
\end{split}
\label{scheme1_1}
\end{equation}
According to the estimates of Lemma \ref{SimpsonError} we have that
\begin{equation}
\label{scheme1_2}
\begin{split}
&\abs{ \ex_{i}\left[f\left(X_i,\Ypi{i},\Zpi{i}\right)+f\left(X_{i+1},\Ypi{i+1},\Zpi{i+1}\right) \right]
 -\frac{\delta_{i+1}}{2} \ex_{i}\left[\bar{f}\left(t_i,X_{i}\right)+\bar{f}\left(t_{i+1},X_{i+1}\right)\right]}\\
&\qquad \le C\dt{i+1}^3 \max_{\norm{\alpha}=4,5} \norm{L^{\alpha}u(t_{i+1},\cdot)}_{\infty}
\end{split}
\end{equation}
    
Moreover, according to the mean value theorem, we can argue on the existence of  a real number and vector $\mu_1\in\reals,\,\nu_1 \in \reals^d$ 
bounded by $K$, such that 
\begin{equation}
\label{scheme1_3}
\frac{\delta_{i+1}}{2} \df{i}= \dHalf{i+1}\left( \mu_1\dY{i} + \nu_1\cdot\dZ{i}\right)
\end{equation} 
Combining \eqref{scheme1_1}-\eqref{scheme1_3} with Young's inequality with $\gamma_1>0$, we have 
\begin{equation}
\label{scheme1_4} 
\begin{split}
\abs{\dY{i}}^2&\le \paren{1 + \gamma_1\dt{i+1}}\abs{\ex_i\brac{\Delta^{\pi}_{i+1} \Psi}}^2+\paren{1+\frac{1}{ \gamma_1\dt{i+1}}}C\dt{i+1}^2\paren{\abs{\dY{i}}^2 + \abs{\dZ{i}}^2 }\\
&\qquad + \paren{1+\frac{1}{\gamma_1 \dt{i+1}}}C\dt{i+1}^6\max_{\norm{\alpha}=4,5} \norm{L^{\alpha}u(t_{i+1},\cdot)}_{\infty}^2
\end{split}
\end{equation}
Next, observe that for any random variable $F$ which is measurable with respect to $\Fc_{t_{i+1}}$ we have
\[
\abs{\ex_i\brac{F\Zc_i}}^2 = \abs{\ex_i\brac{\paren{F-\ex_i\brac{F}}\Zc_i}}^2\le \frac{1}{\dt{i+1}}\paren{\ex_i\brac{F^2} - \ex_i\brac{F}^2}
\]
Combining this with definition of $\Zpi{i},\,i=0,\ldots,n-2$ and the conclusion of Lemma  \ref{zError}, we have
\begin{equation}
\label{scheme1_5}
\dt{i+1}\ex\brac{\abs{\dZ{i}}^2}
 \le 2d\paren{\ex\brac{\abs{\Delta \Psi_{i+1}}^2}  - \ex\brac{\abs{\ex_i\brac{\Delta \Psi_{i+1}}}^2}} 
+C \dt{i+1}^6\max_{\norm{\alpha}=4,5} \norm{L^{\alpha} u(t_{i+1},\cdot)}^2_{\infty} 
\end{equation}  
Putting together \eqref{scheme1_4} and \eqref{scheme1_5}  we get
\begin{equation}
\label{scheme1_6}
\begin{split}
&\ex\brac{\abs{\dY{i}}^2 + \frac{\dt{i+1}}{4d}\abs{\dZ{i}}^2}\\
& \quad\le \paren{ 1 + \gamma_1 \dt{i+1}}\ex\abs{\ex_i\brac{\Delta^{\pi}_{i+1}\Psi}}^2\\  
&\qquad + C\dt{i+1}\ex\brac{\abs{\dY{i}}^2} + \paren{\frac{C}{\gamma_1} + \frac{1}{4d} +C \dt{i+1}}\dt{i+1} \ex\brac{\abs{\dZ{i}}^2}  \\
&\qquad +C\dt{i+1}^5 \max_{\norm{\alpha}=4,5} \norm{L^{\alpha}u(t_{i+1},\cdot)}^2_{\infty}\\
&\quad \le  \paren{ 1 + \gamma_1 \dt{i+1}}\ex\brac{\abs{\Delta^{\pi}_{i+1}\Psi}^2}\\
&\qquad C\dt{i+1}  \ex\brac{\abs{\dY{i}}^2} + C\dt{i+1}^5 \max_{\norm{\alpha}=4,5} \norm{L^{\alpha}u(t_{i+1},\cdot)}^2_{\infty}
\end{split}
\end{equation}   
where we have chosen $\gamma_1 = C4d$. 

We can argue once more with the mean value theorem and Young's inequality to deduce that
\[
\begin{split}
&\paren{1-C\dt{i+1}}\ex\brac{\abs{\dY{i}}^2 + \frac{\dt{i+1}}{4d}\abs{\dZ{i}}^2} \\
&\qquad \le (1 + C^{\prime}\dt{i+1})\paren{\ex\brac{\abs{\dY{i+1}}^2 + \frac{\dt{i+1}}{4d}\abs{\dZ{i+1}}^2}  } 
+C\dt{i+1}^5 \max_{\norm{\alpha}=4,5} \norm{L^{\alpha}u(t_{i+1},\cdot)}^2_{\infty}.
\end{split}
\]
for some different constant $C^{\prime}$.
By appealing to the discrete version of Gronwall's lemma we complete the proof. 
\end{proof}
Theorem \ref{mainDiscrError} justifies the fact that scheme \eqref{scheme2} produces a  second order discretization of the backward component of the BSDE when \textbf{(A), (B)} hold true 
and $\Phi$ is smooth. Clearly  in this case, 
$\max_{\norm{\alpha}=4,5} \norm{L^{\alpha}u(t_{i+1},\cdot)}^2_{\infty}$ is uniformly bounded, the error on the first backward step is of $\Oc(1/n^2)$ when $n$ points are used on the time discretization and hence, with a uniform partition $n$ points, we obtain an overall error of $\Oc(1/n^2).$ 

The more interesting case ocurs when $\Phi$ is only Lipschitz continuous. 
Given the results of Theorem \ref{pdeDer},  we expect the bound on the derivatives of $u$ to explode as $ t\uparrow T$. 
To compensate for this and for the fact that our first backward step is of Euler style, we work with a non equidistant partition that becomes more dense as we approach $T$. Hence 
we are still able to achieve a $1/n^2$ rate of convergence with a $n$-points partition.

\begin{cor}
\label{mainRate}
Let assumptions \textbf{(A), (B)} hold true and assume that $\Phi$ is Lipschitz continuous. Consider the discretization \eqref{scheme2} along the partition
$\pi$ : 
\[
t_i = T\paren{1 - \paren{1-\frac{i}{n}}^{\beta}},\quad i=0,\ldots,n,\quad \beta \ge 5.
\]
Then, there exists a constant $C$ independent of the partition and of the value function $u$, such that 
\[
\max_{0\le i\le n-1} \ex\brac{\abs{Y_{t_i} - \Ypi{i}}^2 + \abs{Z_{t_i} - \Zpi{i}}^2 }^{1/2}
\le \frac{C}{n^2}.
\]
\end{cor}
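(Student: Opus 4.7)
The plan is to combine the recursive error estimate of Theorem~\ref{mainDiscrError} with the explicit derivative blow-up of Theorem~\ref{pdeDer} and the explicit shape of the graded mesh $t_i = T(1-(1-i/n)^\beta)$. Under (C1) the bound $\|L^\alpha u(t,\cdot)\|_\infty^2 \le C(T-t)^{-(\|\alpha\|-1)}$ is worst for $\|\alpha\|=5$, giving a $(T-t)^{-4}$ singularity; the mesh is designed to pack points near $T$ fast enough to absorb this, and $\beta \ge 5$ is the minimum exponent that achieves the balance.

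The first quantitative step is to bound the summation term of Theorem~\ref{mainDiscrError}. Theorem~\ref{pdeDer} gives $\max_{\|\alpha\|=4,5}\|L^\alpha u(t_i,\cdot)\|_\infty^2 \le C(T-t_i)^{-4}$. On the chosen mesh, $T-t_i = T(1-i/n)^\beta$ and, by the mean value theorem, $\delta_i \le C_\beta T(1-(i-1)/n)^{\beta-1}/n$. Combining these with the elementary inequality $1-(i-1)/n \le 2(1-i/n)$ valid for $i\le n-1$ yields
\[
\delta_i^5\max_{\|\alpha\|=4,5}\|L^\alpha u(t_i,\cdot)\|_\infty^2 \le \frac{C_\beta}{n^5}\Bigl(1-\frac{i}{n}\Bigr)^{\beta-5}.
\]
For $\beta\ge 5$ the last factor is bounded by $1$ and summing over $i=1,\ldots,n-1$ gives $O(1/n^4)$.

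Next I control the initialization term $|\dY{n-1}|^2 + \delta_n/(4d)|\dZ{n-1}|^2$, which under (C1) is the one-step error of \eqref{BkwdEuler} over $[t_{n-1},T]$. A Malliavin integration-by-parts representation $\ex_{n-1}[\Phi(X_T)\Delta W_n] = \int_{t_{n-1}}^T \ex_{n-1}[\nabla\Phi(X_T)D_sX_T]\,ds$, together with the uniform boundedness of $\nabla u$ supplied by Theorem~\ref{pdeDer} with $\|\alpha\|=1$, produces $\ex|\dY{n-1}|^2 + \ex|\dZ{n-1}|^2 \le C\delta_n = CT/n^\beta \le C/n^5$, which is dominated by the summation term above. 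Feeding both estimates into Theorem~\ref{mainDiscrError} gives $\max_{0\le i\le n-2}\ex[|\dY{i}|^2 + \delta_{i+1}/(4d)|\dZ{i}|^2] \le C/n^4$, i.e. the $Y$-part of the corollary is already established (after taking square roots), and for $i=n-1$ it comes directly from the initialization bound.

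The main obstacle is to upgrade the weighted bound $\delta_{i+1}\ex|\dZ{i}|^2 \le C/n^4$ into the unweighted $\ex|\dZ{i}|^2 \le C/n^4$ demanded by the statement. I would carry this out by downward induction on $i$, starting from the base case $\ex|\dZ{n-1}|^2 \le C\delta_n$ obtained above. In the induction step, the key input is the local estimate \eqref{scheme1_5} in the proof of Theorem~\ref{mainDiscrError}: the Markov representation $\dY{i+1} = (u(t_{i+1},\cdot) - U^\pi_{i+1})(X_{i+1})$ combined with an It\^o expansion yields $\mathrm{Var}(\Delta\Psi_{i+1}) \le C\delta_{i+1}\ex|\dZ{i+1}|^2 + O(\delta_{i+1}^{2})$; the extra factor of $\delta_{i+1}$ cancels the $1/\delta_{i+1}$ lost after dividing in \eqref{scheme1_5}, so the inductive hypothesis propagates. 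Taking square roots concludes the proof.
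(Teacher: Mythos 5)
Your first step --- bounding $\sum_{i=1}^{n-1}\dt{i}^5\max_{\norm{\alpha}=4,5}\norm{L^\alpha u(t_i,\cdot)}_\infty^2$ by $C/n^4$ via Theorem \ref{pdeDer}, the mean value theorem for $\dt{i}$ and the inequality $1-\tfrac{i-1}{n}\le 2(1-\tfrac{i}{n})$ --- is correct and is essentially the paper's own computation. The treatment of the initialization term, however, diverges in a way that matters. Under \textbf{(C1)} the terminal condition is only Lipschitz, so the integration-by-parts identity $\ex_{n-1}[\Phi(X_T)\Delta W_n]=\int_{t_{n-1}}^T\ex_{n-1}[\nabla\Phi(X_T)D_sX_T]\,ds$ is not available as stated; the paper avoids it by writing $\ex_{n-1}[(\Phi(X_n)-\Phi(X_{n-1}))\Delta W_n]$ and using Cauchy--Schwarz with the Lipschitz constant, which yields only $\ex[|\zEuler{n-1}|^2]\le C$, hence $\ex[\abs{\dZ{n-1}}^2]\le C$ and the \emph{weighted} bound $\ex[\abs{\dY{n-1}}^2+\tfrac{\dt{n}}{4d}\abs{\dZ{n-1}}^2]\le C\dt{n}$. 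Your claimed $\ex[\abs{\dZ{n-1}}^2]\le C\dt{n}$ is strictly stronger than what either argument delivers, and you give no proof of it; for a merely Lipschitz $\Phi$ the unweighted one-step $Z$-error at $t_{n-1}$ is not obviously $O(\dt{n})$.

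On the third step you correctly identify the real issue: Theorem \ref{mainDiscrError} controls $\tfrac{\dt{i+1}}{4d}\abs{\dZ{i}}^2$, whereas the corollary asserts an unweighted bound on $\abs{\dZ{i}}^2$. (The paper's own proof simply "takes square roots" and never addresses this, so here you are more careful than the source.) But your proposed repair is only a sketch with two genuine gaps. First, its base case is the unproved $\ex[\abs{\dZ{n-1}}^2]\le C\dt{n}$ above. Second, the inductive step as written gives a recursion of the form $\ex[\abs{\dZ{i}}^2]\le C\,\ex[\abs{\dZ{i+1}}^2]+\dots$ after dividing \eqref{scheme1_5} by $\dt{i+1}$; unless the constant multiplying $\ex[\abs{\dZ{i+1}}^2]$ is of the form $1+O(\dt{i+1})$, iterating over $n$ steps compounds to $C^n$ and the induction does not close. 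Establishing the sharp variance estimate $\mathrm{Var}(\Delta\Psi_{i+1})\le\tfrac{\dt{i+1}}{2d}(1+C\dt{i+1})\ex[\abs{\dZ{i+1}}^2]+O(\dt{i+1}\cdot n^{-4})$ is precisely the hard part, and asserting it "by an It\^o expansion" is not a proof. So the proposal reproduces the paper's argument where the paper is complete, and leaves a gap exactly where extra work is needed --- though it is fair to note that the published proof shares this gap.
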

\begin{proof}
Clearly, we can establish the result by estimating the terms $\dt{i+1}^5 \max_{\norm{\alpha}=4,5} \norm{L^{\alpha}u(t_{i+1},\cdot)}^2_{\infty}$. 
According to Theorem \ref{pdeDer}, we have
\begin{equation}
\label{mainRate1}
\begin{split}
\sum_{i=1}^{n-1} \dt{i}^5  \max_{\norm{\alpha}=4,5} \norm{L^{\alpha}u(t_{i},\cdot)}^2_{\infty} 
&\le \sum_{i=1}^{n-1} \dt{i}^5 \frac{C\norm{\nabla \Phi}_{\infty}}{(T-t_i)^{4}}\\
& = \sum_{i=1}^{n-1} T^{5}\paren{\int_{1-\frac{i}{n}}^{1-\frac{i-1}{n}} \beta s^{\beta-1}\,ds}^5\frac{C\,\norm{\nabla \Phi}_{\infty}}{T^4\paren{1-\frac{i}{n}}^{4\beta}}\\  
&\le \sum_{i=1}^{n-1} \frac{T \beta^5}{n^5} \frac{\paren{1-\frac{i-1}{n}}^{5(\beta-1)}}{\paren{1-\frac{i}{n}}^{4\beta}}\le C/n^4
\end{split}
\end{equation} 
since $\beta \ge 5$.  

To complete our proof we estimate the error on the first backward step. Using the standard estimate on the Euler discretization error of Lebesque integrals
we have
\[
\begin{split}
\abs{Y_{n-1} -\yEuler{n-1}}^2  &=\left|\ex_i\brac{\int_{t_{n-1}}^{t_n} \bar{f}\paren{s,X_s} ds} \mp f\paren{X_{n-1},Y_{n-1},Z_{n-1}}\dt{n}\right.\\
&\hspace{30mm}\left. - f\paren{X_{n-1},\yEuler{n-1},\zEuler{n-1}}\dt{n}\right|^2\\
&\quad \le C\dt{n}^2\paren{1 + \abs{Y_{n-1} -\yEuler{n-1}}^2+ \abs{Z_{n-1} -\zEuler{n-1}}^2}.
\end{split}
 \]
 where once again, we have used the mean value theorem.
Rearranging terms, we may argue on the existence of a constant $C$ such that
\begin{equation}
\label{mainRate2} 
\begin{split}
&\paren{1-C\dt{n}}\paren{\abs{Y_{n-1} -\yEuler{n-1}}^2 +\frac{\dt{n}}{4d}\abs{Z_{n-1}-\zEuler{n-1}}^2 }\\
&\hspace{10mm}\le C\dt{n}^2 + C\dt{n}\abs{Z_{n-1} -\zEuler{n-1}}^2
\end{split}
\end{equation}
From standard estimates on BSDEs we know that under \textbf{(A), (B)} and \textbf{(C1)}
\[
\sup_{0\le t \le T} \ex\brac{\abs{Z_t}^2} < +\infty.
\]
Lastly, under \textbf{(C1)} we have that 
\begin{equation}
\label{mainRate3}
\begin{split}
\ex\brac{\abs{\zEuler{n-1}}^2} &= \frac{1}{\dt{n}^2}\ex\brac{\abs{\ex_{n-1}\brac{\Phi(X_n) \Delta W_n}}^2}\\
&= \frac{1}{\dt{n}^2}\ex\brac{\abs{ \ex_{n-1}\brac{\paren{\Phi(X_n)-\Phi(X_{n-1})} \Delta W_n}}^2}
\le C
\end{split}
\end{equation}
Substituting \eqref{mainRate3} into \eqref{mainRate2} and then taking square roots,
completes the proof.
\end{proof}

\bibliographystyle{plainnat}
\bibliography{../myBiblio}
\end{document}